\documentclass[11pt, amstex]{amsart}
\usepackage{calc,amssymb,amsmath,ulem}
\usepackage{alltt}
\RequirePackage[dvipsnames,usenames]{color}

\normalem
\input{kmacros3.sty}
\input{xy}
\xyoption{all}
\usepackage{hyperref}
\usepackage{graphicx}
\usepackage[all,cmtip]{xy}
\usepackage{verbatim}
\usepackage[left=1.2in,top=1in,right=1.2in,bottom=1in]{geometry}

\theoremstyle{theorem}
\newtheorem*{mainthm*}{Main Theorem}

%What is going on?  Why isn't \O a script O?!?
\renewcommand{\O}{\mathcal O}
\newcommand{\ie}{{\itshape i.e.} }
\newcommand{\roundup}[1]{\lceil #1 \rceil}

\renewcommand{\to}[1][]{\xrightarrow{\ #1\ }}

\newcommand{\Tt}{{\mathfrak{T}}}

\begin{document}
\numberwithin{equation}{theorem}
\title[Test ideals via a single alteration]{Test ideals via a single alteration and\\ discreteness and rationality of $F$-jumping numbers}
\author{Karl Schwede, Kevin Tucker, Wenliang Zhang}
\begin{abstract}
Suppose $(X, \Delta)$ is a log-$\bQ$-Gorenstein pair.  Recent work of M.~Blickle and the first two authors gives a uniform description of the multiplier ideal $\mJ(X;\Delta)$ (in characteristic zero) and the test ideal $\tau(X;\Delta)$ (in characteristic $p > 0$) via regular alterations.  While in general the alteration required depends heavily on $\Delta$, for a fixed Cartier divisor $D$ on $X$ it is straightforward to find a single alteration (\textit{e.g.} a log resolution) computing $\mJ(X; \Delta + \lambda D)$ for all $\lambda \geq 0$.  In this paper, we show the analogous statement in positive characteristic: there exists a single regular alteration computing $\tau(X; \Delta + \lambda D)$ for all $\lambda \geq 0$.  Along the way, we also prove the discreteness and rationality for the $F$-jumping numbers of $\tau(X; \Delta+ \lambda D)$ for $\lambda \geq 0$ where the index of $K_X + \Delta$ is arbitrary (and may be divisible by the characteristic).
\end{abstract}
\subjclass[2000]{14F18, 13A35, 14B05, 14E15}
\keywords{test ideal, alteration, $F$-jumping number, multiplier ideal}
\thanks{The first author was partially supported by the NSF grant DMS \#1064485.}
\thanks{The second author was partially supported by an NSF postdoctoral fellowship \#1004344.}
\thanks{The third author was partially supported by the NSF grant DMS \#1068946.}
\address{Department of Mathematics\\ The Pennsylvania State University\\ University Park, PA, 16802, USA}
\email{schwede@math.psu.edu}
\address{Department of Mathematics\\ Princeton University\\Princeton, NJ, 08544, USA}
\email{kftucker@princeton.edu}
\address{Department of Mathematics\\ University of Michigan\\ Ann Arbor, MI, 48109, USA}
\email{wlzhang@umich.edu}
\maketitle

\section{Introduction}%\ok{--}{--}

Suppose that $X$ is a normal algebraic variety over a perfect field $k$ and $(X, \Delta \geq 0)$ is an effective log-$\bQ$-Gorenstein pair.  In \cite[Theorems 4.6, 8.2]{BlickleSchwedeTuckerTestAlterations}, the authors showed
\[
\Image\Big( \pi_* \O_Y(\lceil K_Y - \pi^* (K_X + \Delta)  \rceil) \to[\Tr] \O_X \Big) = \left\{
  \begin{array}[c]{lc}
    \mJ(X;\Delta) &
    \begin{array}[t]{c}
      \mbox{ the multiplier ideal,} \\ \mbox{ if } \mathrm{char}(k) = 0
    \end{array}   \medskip \\
    \tau(X;\Delta) &  \begin{array}[t]{c}
      \mbox{ the test ideal,} \\ \mbox{ if } \mathrm{char}(k) = p > 0
    \end{array}
  \end{array}
\right.
\]
for a sufficiently large regular separable alteration $\pi : Y \to X$, where $\Tr : K(Y) \to K(X)$ is the corresponding trace map of function fields.
%This is significant because, in characteristic zero, for any regular alteration such that $\Supp(K_Y - \pi^* (K_X + \Delta))$ has normal crossings (\textit{e.g.} any log resolution), this image coincides with the multiplier ideal $\mJ(X, \Delta)$.
In general the alteration required depends heavily on $\Delta$; however, when working in characteristic zero, one can often perturb $\Delta$ while using the same alteration.  More precisely, fix a Cartier divisor $D \geq 0$ on $X$. Any regular alteration $\pi \: Y \to X$ such that
\[
\Supp(K_Y - \pi^* (K_X + \Delta)) \cup \Supp(\pi^{*}D)
\]
is a divisor with simple normal crossings (\textit{e.g.} a log resolution) will suffice to  compute $\mJ(X; \Delta + \lambda D)$ for all $\lambda \geq 0$.
%However, in positive characteristic, the alteration $\pi : Y \to X$ constructed in \cite{BlickleSchwedeTuckerTestAlterations} depended in a critical way on the coefficients of the divisor $\Delta$.  Thus it is natural to ask whether there exists a \emph{single} alteration such that $\tau(X; \Delta + \lambda D)$ can be described as such an image.  We give an affirmative answer to this question below.
Our principal result is to show the analogous statement for the test ideal in positive characteristic.

\begin{mainthm*}[Theorem \ref{thm.MainTheorem}]
Let $X$ be a normal algebraic variety over a perfect field of characteristic $p > 0$
and suppose $\Delta \geq 0$ is a $\bQ$-divisor on $X$ such that $K_X + \Delta$ is $\bQ$-Cartier.  If $D \geq 0$ is a Cartier divisor on $X$, there exists a finite separable cover (alternatively, a regular separable alteration) $\pi : Y \to X$ such that
\[
\tau(X; \Delta + \lambda D) = \Image\Big( \pi_* \O_Y(\lceil K_Y - \pi^*(K_X + \Delta + \lambda D) \rceil) \to[\Tr] K(X) \Big)
\]
for all real numbers $\lambda \geq 0$, where $\Tr : K(Y) \to K(X)$ is the corresponding trace map of function fields.
\end{mainthm*}
\noindent  In fact, in producing the finite separable cover above, one may take $X$ to be an arbitrary $F$-finite normal scheme (rather than just a variety over a perfect field).

Our method of proof relies, perhaps unsurprisingly, on the discreteness and rationality for the $F$-jumping numbers of $\tau(X; \Delta+ \lambda D)$ -- a result which is \textit{a posteriori} equivalent to the theorem above.  Nevertheless, in order to prove our main result, we must first generalize existing discreteness and rationality results to show the following.

\begin{theorem*}[Theorem \ref{thm.DiscAndRat}]
Suppose that $X$ is an $F$-finite normal integral scheme, $D$ is a Cartier divisor, and $\Delta$ is a $\bQ$-divisor such that $K_X + \Delta$ is $\bQ$-Cartier (with arbitrary index).  Then the $F$-jumping numbers of $\tau(X; \Delta+\lambda D)$ for $\lambda \geq 0$ are discrete and rational.
\end{theorem*}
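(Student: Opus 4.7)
The plan is to reformulate the question in the framework of Cartier algebras and reduce it to a gauge-boundedness statement of the Blickle type. By standard localization arguments, I would reduce to the affine case $X = \Spec R$ with $R$ an $F$-finite normal domain and $D = \divisor(f)$ principal. For each $e \geq 0$, set
\[ \sC^\Delta_e := \Hom_R\bigl(F^e_* R(\lceil (p^e-1)\Delta\rceil),\, R\bigr), \]
which assembles into a graded Cartier algebra $\sC^\Delta = \bigoplus_e \sC^\Delta_e$; this is well-defined regardless of the index of $K_X+\Delta$, and the test ideal $\tau(R;\Delta)$ is the unique minimal nonzero $\sC^\Delta$-compatible nonzero ideal. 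Adding $\lambda D$ corresponds to the twisted algebra $\sC^{\Delta,\lambda D}_e := f^{\lceil \lambda(p^e-1)\rceil}\,\sC^\Delta_e$, whose minimal compatible nonzero ideal is $\tau(R; \Delta + \lambda D)$.

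The core technical step is to verify that $\sC^\Delta$ is \emph{gauge bounded} in the sense of Blickle. Given gauge boundedness, the twisted family $\sC^{\Delta,\lambda D}$ inherits a uniform gauge bound (since $\lceil \lambda(p^e-1)\rceil/p^e \to \lambda$), and Blickle's general discreteness and rationality criterion for gauge-bounded Cartier algebras applies to conclude the theorem. When the index of $K_X+\Delta$ is prime to $p$, gauge boundedness is straightforward because $\sC^\Delta$ is finitely generated in a single degree $e_0$ with $(p^{e_0}-1)(K_X+\Delta)$ Cartier, recovering the known discreteness/rationality results. The novelty lies in handling arbitrary index, where no such single $e_0$ exists.

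To handle the case when $p$ divides the index, I would construct a finite separable cover $\pi:Y\to X$ — for instance a cyclic cover stripping off the prime-to-$p$ part of the index, possibly iterated — such that the pulled-back Cartier algebra on $Y$ has more manageable structure. The transformation rule for Cartier algebras under finite separable covers (which underlies the alteration descriptions of test ideals recalled in the introduction) then allows one to transfer gauge boundedness from $Y$ back to $X$. Alternatively, I would attempt to prove gauge boundedness of $\sC^\Delta$ directly by identifying finitely many generators spread across different degrees that bound the gauge, exploiting the interaction between Frobenius and $\Delta$ even when $(p^e-1)\Delta$ is never integral.

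The main obstacle is precisely this last step: the classical proofs of gauge boundedness rely heavily on being able to pick a single $e_0$ so that $(p^{e_0}-1)(K_X+\Delta)$ is Cartier, and when $p$ divides the index this is impossible. Substituting for the ``single generator'' argument — either via finite separable covers or by an intrinsic analysis of $F^e_*R(\lceil(p^e-1)\Delta\rceil)$ as $e$ varies — is where the bulk of the work lies, and it is also what determines how much of the proof extends beyond varieties to the general $F$-finite setting advertised in the main theorem.
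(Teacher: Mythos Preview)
Your plan has a genuine gap at exactly the point you flag as the obstacle, and the route you sketch around it does not work. A finite \emph{separable} cover can only divide out the prime-to-$p$ part of the index of $K_X+\Delta$; after your cyclic cover the index on $Y$ becomes a pure $p$-power, which is precisely the bad case, not a ``more manageable'' one. There is no separable cover that kills the $p$-part of the index in characteristic $p$. Your alternative, a direct gauge-boundedness argument for $\sC^\Delta$ via generators spread across degrees, is left entirely unspecified, and it is not clear such generators exist in any useful sense when $(p^e-1)\Delta$ is never integral.

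The paper's proof bypasses all of this with a one-line reduction that you are missing. First translate to parameter test modules: with $\Gamma = K_X+\Delta$, one has $\tau(X;\Delta+\lambda D)=\tau(\omega_X;\Gamma+\lambda D)$, so it suffices to treat the $F$-jumping numbers of $\tau(\omega_X;\Gamma+\lambda D)$ for an arbitrary $\bQ$-Cartier $\Gamma$. Now use the \emph{inseparable} cover, namely Frobenius itself: the transformation rule
\[
\Phi^e\bigl(F^e_*\,\tau(\omega_X;\,p^e\Gamma+\lambda p^e D)\bigr)=\tau(\omega_X;\Gamma+\lambda D)
\]
holds for every $\lambda$. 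Choosing $e$ so that $p^e\Gamma$ has index prime to $p$, the known discreteness and rationality (the index-prime-to-$p$ case from Blickle--Schwede--Takagi--Zhang) applies to $\tau(\omega_X;p^e\Gamma+\mu D)$, and pushing forward under $\Phi^e$ immediately gives the result for $\tau(\omega_X;\Gamma+\lambda D)$. In short, multiplying $\Gamma$ by $p^e$ clears the $p$-part of the index, and Frobenius provides exactly the compatibility needed to descend the conclusion; no gauge-boundedness analysis of $\sC^\Delta$ is required.
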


\noindent
This result follows easily by combining the main result of \cite{BlickleSchwedeTakagiZhang} (which in turn builds upon \cite{KatzmanLyubeznikZhangOnDiscretenessAndRationality,BlickleMustataSmithFThresholdsOfHypersurfaces}) together with the main result of \cite{SchwedeTuckerTestIdealFiniteMaps}. As far as we are aware, our main theorem is also the first application of the discreteness and rationality of these $F$-jumping numbers.
\vskip 6pt
\noindent {\it Acknowledgements.}  The authors would like to thank the referee for several helpful comments.  
Furthermore, the authors would like to thank Manuel Blickle for valuable and inspiring conversations.  In fact, this paper was developed when all of the authors visited the Johannes Gutenberg-Universit\"at Mainz during the summer of 2011.  This visit was partially funded by the SFB/TRR45 \emph{Periods, moduli, and the arithmetic of algebraic varieties}.

\section{Preliminaries}
\label{sec.Preliminaries}

Throughout this paper, all schemes (and rings) are Noetherian, separated, have characteristic $p > 0$, and are always $F$-finite.
In particular, they are also excellent \cite{KunzOnNoetherianRingsOfCharP} and are assumed to possess a dualizing complex (\textit{cf.} \cite{Gabber.tStruc}).  In fact, very little is lost if one restricts to the study of varieties of finite type over a perfect field (as in the introduction).  For an integral scheme $X$, we use $\omega_X$ to denote the first non-zero cohomology of the dualizing complex $\omega_X^{\mydot}$.

% \begin{definition}
% An \emph{alteration} is a proper dominant map of integral schemes of the same dimension.  A \emph{finite cover} is an alteration which is also a finite map.
% \end{definition}

On a normal integral scheme $X$, a \emph{canonical divisor} $K_{X}$ is any integral Weil divisor such that $\O_X(K_X) \cong \omega_X$. A $\bQ$-divisor (respectively $\bR$-divisor) $\Delta$ is a formal linear combination of prime Weil divisors with coefficients in $\bQ$ (respectively $\bR$).   Writing $\Delta = \sum a_i D_i$ where the $D_i$ are distinct prime divisors, we use $\roundup{\Delta} = \sum \roundup{a_i} D_i$ to denote the round-up of $\Delta$. We say a $\bQ$-divisor $\Delta$ is \emph{$\bQ$-Cartier} if there exists an integer $n > 0$ such that $n \Delta$ has integer coefficients and as such is also a Cartier divisor.  In this case, the smallest such $n$ is called the \emph{index} of $\Delta$.

\begin{definition}  Suppose $X$ is an integral scheme.
  \begin{enumerate}
  \item
We say $(X, \Delta)$ is a \emph{pair} if $X$ is normal and $\Delta$ is a $\bR$-divisor.  When additionally $K_X + \Delta$ is $\bQ$-Cartier, $(X, \Delta)$ is called \emph{log-$\bQ$-Gorenstein}.
\item
An \emph{alteration} $\pi : Y \to X$ is a generically finite proper dominant map from an integral scheme $Y$.  A \emph{finite cover} is an alteration which is also a finite map.
  \end{enumerate}
\end{definition}

For an alteration $f : Y \to X$, we have the Grothendieck trace map
\[
\Tr_{f} : f_* \omega_Y \to \omega_X
\]
coming from the theory of Grothendieck-Serre duality; for a detailed construction of this map, please see \cite[Section 2.1]{BlickleSchwedeTuckerTestAlterations}.
In the case that $f = F^e$ is the $e$-iterated Frobenius, we use $\Phi^e$ to denote $\Tr_{F^e}$ and call this map the \emph{canonical dual of Frobenius}.  In contrast, if $f$ is separable and $K_{Y} - \pi^{*}K_{X} = \Ram_{\pi}$ equals the ramification divisor wherever $\pi$ is finite, then the corresponding map $\Tr_{f} : \O_{Y}(K_{Y}) \to \O_{X}(K_{X})$ is induced by the trace map of function fields  $\Tr : K(Y) \to K(X)$.

As in \cite[Proposition 2.6]{BlickleSchwedeTuckerTestAlterations}, for any log-$\bQ$-Gorenstein pair $(X, \Delta)$ and any alteration $f : Y \to X$ from a normal scheme $Y$, the trace induces a natural map
\[
\Tr_{f} : f_* \O_Y(\lceil K_Y - f^*(K_X + \Delta) \rceil) \to K(X).
\]
In fact, the image of this map is contained in $\O_X(\lceil - \Delta \rceil)$, and so is inside $\O_{X}$ whenever $\Delta \geq 0$.  Using also that the trace is compatible with composition in general, this readily implies
\begin{equation}
\label{eq:factorsthrough}
\begin{array}{l}
\Image( g_* \O_{Y'}(\lceil K_{Y'} - g^*(K_X + \Delta) \rceil) \to[\Tr_{g}] K(X) ) \\ \qquad \qquad \qquad \qquad \quad \subseteq  \quad \Image( f_* \O_Y(\lceil K_Y - f^*(K_X + \Delta) \rceil) \to[\Tr_{f}] K(X) )
\end{array}
\end{equation}
whenever another alteration $g : Y' \to X$ from a normal scheme $Y'$ factors through $f$.

Rather than reviewing the definitions of the test ideals and parameter test modules of pairs (involving $\bR$-divisors) here,
we refer the reader to \cite[Definitions 2.23, 2.27]{BlickleSchwedeTuckerTestAlterations} in the $\bQ$-divisor case which generalizes to the settings of $\mathbb{R}$-divisors without change.  However, a few words of warning are in order.
First, whenever we speak of the test ideal throughout, we will always mean the big or non-finitistic test ideal; regardless, the classical or finitistic test ideal coincides with this notion in the log-$\bQ$-Gorenstein setting, \ie in (essentially) all situations considered in this paper.
Furthermore, we also caution that we allow the $\bR$-divisors~$\Delta$ in our pairs $(X, \Delta)$ to have \emph{negative coefficients}, as in \cite[Definition 6.12]{SchwedeTuckerTestIdealFiniteMaps} or \cite[Definition 2.27]{BlickleSchwedeTuckerTestAlterations}.  In particular, this means that the test ideal $\tau(X, \Delta)$ may not be an honest ideal sheaf, but rather it is simply a fractional ideal sheaf (respectively, the parameter test module $\tau(\omega_{X};\Gamma)$ may not be a submodule of $\omega_{X}$).
We briefly record the following list of relevant properties of test ideals and parameter test modules for pairs below.

\begin{proposition}%\cite{TakagiInterpretationOfMultiplierIdeals,BlickleSchwedeTakagiZhang,BlickleSchwedeTuckerTestAlterations,SchwedeTuckerTestIdealFiniteMaps,SchwedeTuckerTestIdealSurvey,SchwedeTakagiRationalPairs}
\label{prop.BasicPropertiesOfTau}
Suppose that $(X, \Delta)$ is a log-\mbox{$\bQ$-Gorenstein} pair, $\Gamma$ is a $\bQ$-Cartier $\bQ$-divisor, $t \geq 0$ is a real number, and $D \geq 0$ is a Cartier divisor.  Then
\begin{itemize}
\item[(1)]  \parbox{.9\textwidth}{\centering{$\tau(X; \Delta + D) = \tau(X; \Delta) \tensor \O_X(-D)$ and $\tau(\omega_X; \Gamma + D) = \tau(\omega_X; \Gamma) \tensor \O_X(-D)$.}} \\\cite[Page 9, Basic Property (ii)]{TakagiInterpretationOfMultiplierIdeals} and \cf \cite[Lemma 2.29]{BlickleSchwedeTuckerTestAlterations}.
\item[(2)]  For any sufficiently small real number $\varepsilon > 0$, \newline
\parbox{.9\textwidth}{\centering{ $\tau(X; \Delta + \varepsilon D) = \tau(X; \Delta)$ and $\tau(\omega_X; \Gamma + \varepsilon D) = \tau(\omega_X; \Gamma)$.}}
\newline
\cite[Lemma 3.23]{BlickleSchwedeTakagiZhang}.
\item[(3)]  For any finite cover $\pi : Y \to X$ with trace map $\Tr_{\pi}$ as above, we have \newline \parbox{.9\textwidth}{\centering $\Tr_\pi\left( \pi_* \tau(\omega_Y; \pi^* ( \Gamma + tD) ) \right) = \tau(\omega_X; \Gamma + tD)$.}  \newline \cite[Proposition 4.4]{BlickleSchwedeTuckerTestAlterations} \cf \cite{SchwedeTuckerTestIdealFiniteMaps}.
\item[(4)]  In particular, if $\Phi^e$ is the canonical dual of Frobenius, then
\newline
\parbox{.9\textwidth}{\centering $\Phi^{e}\big(F^e_* \tau(\omega_X; p^{e}\Gamma + t p^e D)\big) = \tau(\omega_X; \Gamma + t D)$.}
\item[(5)]  If $t' > t$, then
\newline
\parbox{.9\textwidth}{\centering $\tau(X; \Delta + t' D) \subseteq \tau(X; \Delta + tD)$.}  \newline \cite[Page 9, Basic Property (i)]{TakagiInterpretationOfMultiplierIdeals}.
%\item[(5)]
\end{itemize}
\end{proposition}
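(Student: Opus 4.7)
The proposition collects five essentially known facts about (big) test ideals and parameter test modules, most of which are quoted directly with the citations supplied. The plan, therefore, is to indicate how each item reduces to a result in the literature (generalized without change from the $\bQ$-divisor case to the $\bR$-divisor case, as the paragraph preceding the proposition asserts) or to a one-line manipulation of definitions. I would handle the five items separately, in the order below.

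The projection-type identity (1) is verified locally: if $D$ is Cartier, then locally $D = \divisor(f)$ for some non-zero $f$, and multiplication by $f$ gives a Frobenius-equivariant isomorphism intertwining the test-element calculation for $\Delta + D$ with that for $\Delta$, the twist by $\O_X(-D)$ recording the scaling. This is exactly the content of \cite[Basic Property (ii)]{TakagiInterpretationOfMultiplierIdeals} together with \cite[Lemma~2.29]{BlickleSchwedeTuckerTestAlterations} in the parameter test module form, and I would simply cite it. Monotonicity (5) is even more immediate: writing $\Delta + t'D = (\Delta + tD) + (t'-t)D$ with $(t'-t)D \geq 0$, the containment $\tau(X; \Delta + t'D) \subseteq \tau(X; \Delta + tD)$ follows from \cite[Basic Property (i)]{TakagiInterpretationOfMultiplierIdeals} applied to the effective perturbation $(t'-t)D$.

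Right-continuity (2) is the most substantive of the ``formal'' items. Conceptually, it says that $0$ is never an $F$-jumping number of the function $\lambda \mapsto \tau(X; \Delta + \lambda D)$ at the base value $\lambda = 0$; equivalently, the test ideal is constant on a small interval $[0, \varepsilon)$. In the setting at hand this is precisely \cite[Lemma~3.23]{BlickleSchwedeTakagiZhang}, and I would quote it verbatim. The trace compatibility (3) is the central input and the only one whose proof is nontrivial; it is \cite[Proposition~4.4]{BlickleSchwedeTuckerTestAlterations} (relying on the trace computations of \cite{SchwedeTuckerTestIdealFiniteMaps}) and would again be cited.

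Finally, (4) is deduced from (3) by specializing to the finite cover $\pi = F^e : X \to X$. The Grothendieck trace $\Tr_{F^e}$ is by definition the canonical dual of Frobenius $\Phi^e$, and under the identification $F^{e*} K_X = p^e K_X$ the pullback $F^{e*}(\Gamma + tD)$ becomes $p^e \Gamma + t p^e D$; substituting into (3) yields precisely the stated formula. The main (and only real) obstacle is verifying that the $\bR$-divisor generalizations of the cited $\bQ$-divisor statements proceed without change, but this is explicitly flagged in the preamble to the proposition and poses no issue.
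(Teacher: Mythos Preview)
Your proposal is correct and matches the paper's treatment exactly: the proposition is stated without proof in the paper, each item carrying only the citation you reproduce, and (4) is flagged with ``In particular'' precisely because it is the specialization of (3) to $\pi = F^e$ that you describe. There is nothing to add.
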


%\begin{remark}
%\label{rem.PullbackMakesCartier}
%Suppose that $X$ is a normal integral scheme, that $D$ is a Cartier divisor on $X$ and $\Gamma$ is a $\bQ$-Cartier $\bQ$-divisor on $X$.
%Then notice that Proposition \ref{prop.BasicPropertiesOfTau}(1) and (3) together imply that $\Phi\big(F^e_* ( \tau(\omega_X;\Gamma) \tensor \O_X(-D) ) \big) = \tau(\omega_X; \Gamma + {1/p^e} D)$.  We will use this fact.
%\end{remark}

\section{Discreteness and rationality}

\begin{definition}
Suppose that $X$ is a normal integral scheme, $\Delta$ and $\Gamma$ are $\bQ$-divisors, and $D$ is a Cartier divisor.
A real number $t \geq 0$ is an \emph{$F$-jumping number of the test ideals $\tau(X; \Delta + \lambda D)$ for $\lambda \geq 0$} if for all $\varepsilon > 0$, we have
\[
\tau(X; \Delta + (t - \varepsilon)D) \neq \tau(X; \Delta + {t}D).
\]
Similarly, a real number $t \geq 0$ is an \emph{$F$-jumping number of the parameter test modules $\tau(\omega_X; \Gamma + \lambda D)$ for $\lambda \geq 0$} if for all $\varepsilon > 0$ we have
\[
\tau(\omega_X; \Gamma + (t - \varepsilon)D) \neq \tau(\omega_X; \Gamma+ {t}D).
\]
Furthermore, we say the the $F$-jumping numbers of $\tau(X, \Delta + \lambda D)$ (respectively $\tau(\omega_X; \Gamma + \lambda D)$) are \emph{discrete and rational} if they are a set of rational numbers without any accumulation points (technically, this is a stronger condition than being discrete).
\end{definition}

\begin{lemma}
  \label{lem:idealmodulediscretesame}
Suppose $D \geq 0$ is a Cartier divisor on a normal integral scheme $X$, and let $m$ be a positive integer.  The following statements are equivalent:
\begin{enumerate}
\item For all $\bQ$-divisors $\Delta$ where $K_{X} + \Delta$ is $\bQ$-Cartier with index $m$, the $F$-jumping numbers of $\tau(X; \Delta + \lambda D)$ for $\lambda \geq 0$ are discrete and rational.
\item For all $\bQ$-Cartier $\bQ$-divisors $\Gamma$ with index $m$, the $F$-jumping numbers of $\tau(\omega_{X}; \Gamma + \lambda D)$ for $\lambda \geq 0$ are discrete and rational.
\end{enumerate}
In particular, \cite[Main Theorem]{BlickleSchwedeTakagiZhang} implies both statements for $m$ not divisible by $p$.
\end{lemma}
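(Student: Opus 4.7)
The plan is to set up a bijective correspondence
\[
\Delta \longleftrightarrow \Gamma := K_X + \Delta
\]
between $\bQ$-divisors $\Delta$ with $K_X + \Delta$ being $\bQ$-Cartier of index $m$ and $\bQ$-Cartier $\bQ$-divisors $\Gamma$ of index $m$, and then to show that under this correspondence the two filtrations $\{\tau(X; \Delta + \lambda D)\}_{\lambda \geq 0}$ and $\{\tau(\omega_X; \Gamma + \lambda D)\}_{\lambda \geq 0}$ are literally equal as fractional ideals inside $K(X)$. Clearly, $\Delta \mapsto \Gamma$ is a bijection that preserves the index of $\bQ$-Cartier-ness, and every $\Gamma$ in (2) arises in this way from a unique $\Delta$ in (1).

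The central identity to establish is
\[
\tau(X; \Delta + \lambda D) \;=\; \tau\bigl(\omega_X;\, (K_X + \Delta) + \lambda D\bigr) \qquad \text{inside } K(X).
\]
This is immediate from the alteration-based descriptions of both sides recalled from \cite{BlickleSchwedeTuckerTestAlterations}. Namely, for any sufficiently large regular separable alteration $\pi \: Y \to X$, both quantities equal the image of
\[
\pi_* \O_Y\bigl(\lceil K_Y - \pi^*(K_X + \Delta + \lambda D) \rceil\bigr)
\]
under the field trace $K(Y) \to K(X)$. Indeed, after fixing $K_Y$ so that $\omega_Y \cong \O_Y(K_Y)$, and using that $K_Y$ is integral, one has
\[
\omega_Y \otimes \O_Y(\lceil -\pi^*(K_X + \Delta + \lambda D)\rceil) = \O_Y(\lceil K_Y - \pi^*(K_X + \Delta + \lambda D)\rceil),
\]
and the Grothendieck trace $\pi_*\omega_Y \to \omega_X$ of a separable morphism is induced by the very same field trace as the test-ideal trace map.

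Since $\tau(X; \Delta+\lambda D)$ and $\tau(\omega_X; \Gamma + \lambda D)$ thus coincide as fractional ideals of $K(X)$ for every $\lambda \geq 0$, their filtrations have exactly the same set of $F$-jumping numbers. Combined with the bijection $\Delta \leftrightarrow \Gamma$, this immediately yields the equivalence of (1) and (2). For the final assertion, the Main Theorem of \cite{BlickleSchwedeTakagiZhang} directly establishes (1) when $m$ is not divisible by $p$, and the equivalence just proved upgrades this to (2) in the same range.

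I do not anticipate any serious obstacle. The only mildly delicate point is checking that the Grothendieck trace into $\omega_X$ and the field trace into $K(X)$ agree on the common source sheaf $\pi_* \O_Y(\lceil K_Y - \pi^*(K_X + \Delta + \lambda D) \rceil)$, which is standard for separable morphisms once a canonical divisor $K_X$ is fixed and $\omega_X$ is embedded in $K(X)$ as $\O_X(K_X)$.
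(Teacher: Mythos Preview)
Your proposal is correct and follows essentially the same approach as the paper: both set $\Gamma = K_X + \Delta$, establish the identity $\tau(X;\Delta+\lambda D)=\tau(\omega_X;\Gamma+\lambda D)$, and conclude that the $F$-jumping numbers coincide. The only minor difference is that the paper justifies this identity by invoking the argument of \cite[Lemma~2.29]{BlickleSchwedeTuckerTestAlterations} directly from the definitions, whereas you deduce it from the alteration descriptions; the paper also explicitly reduces to the affine case with $\Delta\geq 0$ before citing \cite{BlickleSchwedeTakagiZhang} for the final assertion, a small step you skip.
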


\begin{proof}
Setting $\Gamma = K_{X} + \Delta$, it follows from the argument of \cite[Lemma 2.29]{BlickleSchwedeTuckerTestAlterations} that we have
\[
\tau(\omega_{X}; \Gamma + \lambda D) = \tau(X; \Delta + \lambda D).
\]
for all real numbers $\lambda \geq 0$.  In particular, the corresponding $F$-jumping numbers are the same, and the desired equivalence follows immediately.  For the remaining assertion, as these statements are local, it is harmless to assume that $X$ is affine and $\Delta \geq 0$. When $p$ does not divide $m$, (1) is precisely the statement of \cite[Theorem 5.6]{BlickleSchwedeTakagiZhang}
\end{proof}

In order to prove our main result in the next section, we must first generalize the existing discreteness and rationality results to show the following.

\begin{theorem}
\label{thm.DiscAndRat}
Suppose $(X,\Delta)$ is a log-$\bQ$-Gorenstein pair and $D \geq 0$ is a Cartier divisor.
Then the $F$-jumping numbers of $\tau(X; \Delta + \lambda D)$ for $\lambda \geq 0$ are discrete and rational.
\end{theorem}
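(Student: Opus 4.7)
The plan is to reduce the general case to the case where the index of $K_X+\Delta$ is coprime to $p$, which is precisely what Lemma~\ref{lem:idealmodulediscretesame} already handles via the main result of \cite{BlickleSchwedeTakagiZhang}. By that same lemma, it suffices to work on the parameter-test-module side: set $\Gamma := K_X + \Delta$ (of some index $m$), and I will show that the $F$-jumping numbers of $\tau(\omega_X; \Gamma + \lambda D)$ for $\lambda \geq 0$ are discrete and rational. Writing $m = p^a m'$ with $\gcd(m',p)=1$, the case $a=0$ is immediate, so my job is to transfer the conclusion from ``index $m'$'' down to ``index $m$.''

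The key idea is to exploit Proposition~\ref{prop.BasicPropertiesOfTau}(4), which, through the canonical dual of Frobenius, relates the test module of $\Gamma$ to that of $p^a\Gamma$. I would set $\Gamma_0 := p^a \Gamma$: this is $\bQ$-Cartier with index exactly $m'$ (coprime to $p$), so Lemma~\ref{lem:idealmodulediscretesame} applied to $\Gamma_0$ ensures that the set
\[
J_0 \;=\; \{\, \mu \geq 0 \,:\, \mu \text{ is an $F$-jumping number of } \tau(\omega_X; \Gamma_0 + \mu D) \,\}
\]
is a discrete set of non-negative rationals. Then Proposition~\ref{prop.BasicPropertiesOfTau}(4) applied with $e = a$ and $t = \lambda$ yields the identity
\[
\tau(\omega_X; \Gamma + \lambda D) \;=\; \Phi^{a}\bigl(F^{a}_{*}\, \tau(\omega_X; \Gamma_0 + \lambda p^a D)\bigr)
\]
for every $\lambda \geq 0$, so $\tau(\omega_X; \Gamma + \lambda D)$ is completely determined by $\tau(\omega_X; \Gamma_0 + \lambda p^a D)$.

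From here I would argue by contrapositive that every $F$-jumping number $t$ of $\tau(\omega_X; \Gamma + \lambda D)$ must satisfy $tp^a \in J_0$. If instead there existed some $\varepsilon' > 0$ with $\tau(\omega_X; \Gamma_0 + (tp^a - \varepsilon')D) = \tau(\omega_X; \Gamma_0 + tp^a D)$, then applying $\Phi^a \circ F^{a}_{*}$ to both sides and invoking the displayed identity would give $\tau(\omega_X; \Gamma + (t - \varepsilon'/p^a)D) = \tau(\omega_X; \Gamma + tD)$, contradicting the assumption that $t$ is a jumping number (take $\varepsilon := \varepsilon'/p^a > 0$). Consequently the set $J$ of $F$-jumping numbers of $\tau(\omega_X; \Gamma + \lambda D)$ satisfies $J \subseteq p^{-a} J_0$, and discreteness and rationality of $J$ descend from $J_0$.

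The main obstacle, I expect, is spotting this Frobenius-scaling trick in the first place. No finite separable cover of $X$ needs to be constructed at all — the ramification of the Frobenius itself ``absorbs'' the $p$-part of the index of $K_X + \Delta$, while Proposition~\ref{prop.BasicPropertiesOfTau}(4) makes this change in the index completely transparent at the level of test modules. Once the correct replacement $\Gamma_0 = p^a \Gamma$ is chosen, the remaining work is formal bookkeeping, with the only mild subtlety being to carry the $p^a$-rescaling carefully between the $\lambda$-axis and the $\mu$-axis so that the arbitrary $\varepsilon > 0$ in the definition of an $F$-jumping number really does correspond to an arbitrary $\varepsilon' > 0$ on the other side.
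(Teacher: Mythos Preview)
Your proposal is correct and follows essentially the same approach as the paper: reduce to the parameter test module via Lemma~\ref{lem:idealmodulediscretesame}, multiply $\Gamma$ by a suitable power of $p$ so that the index becomes coprime to $p$, and use Proposition~\ref{prop.BasicPropertiesOfTau}(4) to pull the known discreteness and rationality back along Frobenius. The only difference is cosmetic---you spell out the contrapositive showing $J \subseteq p^{-a}J_0$, while the paper simply asserts the conclusion.
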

\begin{proof}
By Lemma~\ref{lem:idealmodulediscretesame}, it suffices to check that
the $F$-jumping numbers of $\tau(\omega_{X}; \Gamma + \lambda D)$ for $\lambda \geq 0$ are discrete and rational for any $\bQ$-Cartier $\bQ$-divisor $\Gamma$.
This statement is known if $p$ does not divide the index of $\Gamma$, and one can use Frobenius to reduce to this case in general.  Choose $e > 0$ such that $p^e \Gamma$ has index not divisible by $p$.  If $\Phi^{e}$ is the canonical dual of the $e$-iterated Frobenius $F^e$, we know that
\[
\Phi^e\big( F^e_* \tau(\omega_X; p^e \Gamma + \lambda p^e D )\big) = \tau(\omega_X; \Gamma + \lambda D )
\]
for any real number $\lambda \geq 0$ by Proposition \ref{prop.BasicPropertiesOfTau}(4).   Since the main result of \cite{BlickleSchwedeTakagiZhang} gives that the $F$-jumping numbers of
 $\tau(\omega_X; p^e \Gamma + \lambda D )$ for $\lambda \geq 0$ are discrete and rational, those of $\tau(\omega_X; \Gamma + \lambda D )$ must be as well.
\end{proof}

\begin{remark}
In the case that $X$ is essentially of finite type over an $F$-finite field and $\ba \subseteq \O_X$ is a non-zero ideal, a similar argument also yields the discreteness and rationality of the $F$-jumping numbers of $\tau(X; \Delta, \ba^{\lambda})$ for $\lambda \geq 0$ and any $\Delta$ such that $K_X + \Delta$ is $\bQ$-Cartier.  The simplest way to do this is to generalize Proposition \ref{prop.BasicPropertiesOfTau}(4) to this setting and proceed as above. Alternately, one can apply \cite[Theorem 6.23]{SchwedeTuckerTestIdealFiniteMaps} directly as follows.  We first construct $\Tt : F^e_* \O_X \to \O_X$ appearing in the notation of \cite{SchwedeTuckerTestIdealFiniteMaps}.  By working locally, we may assume that $K_X \leq 0$, and twisting $\Phi^e : F^e_* \omega_X \to \omega_X$ by $\O_X(-K_X)$ yields the composition
\[
\Tt : F^e_* \O_X \to[\subseteq] F^e_* \O_X(K_X - p^e K_X) \to[\Phi^{e} \otimes \O_{X}(-K_{X})] \O_X .
\]
Note that $\Tt$ corresponds to the divisor
\[
\text{Ram}_{\Tt} :=  K_X - p^e K_X
\]
as in \cite[Equation (2.2.1)]{SchwedeTuckerTestIdealFiniteMaps}, so that we have $\Delta_{X/\Tt} = p^e \Delta - \text{Ram}_{\Tt} = p^e (K_X + \Delta) - K_X$ in the notation of \cite[Setting 6.17]{SchwedeTuckerTestIdealFiniteMaps}.  Thus, for $e$ sufficiently large, $K_X + \Delta_{X/\Tt}$ is $\bQ$-Cartier with index not divisible by $p > 0$.  Therefore, the main theorem of \cite{BlickleSchwedeTakagiZhang} applies and we know that the $F$-jumping numbers of $\tau(X; \Delta_{X/\Tt}, (\ba^{[p^e]})^{\lambda})$ are discrete and rational.  Finally, by \cite[Theorem 6.23]{SchwedeTuckerTestIdealFiniteMaps}, so are the $F$-jumping numbers of $\tau(X; \Delta, \ba^{\lambda})$.
\end{remark}

\section{The existence of a single alteration}

In this section, we prove our main theorem. The crux of the argument lies in the following lemma.

\begin{lemma}
\label{lem.TauOmegaBound}
Suppose that $(X, \Delta)$ is a log $\bQ$-Gorenstein pair and that $D \geq 0$ is a Cartier divisor. Given two adjacent $F$-jumping numbers $t_0 < t_1$ of $\tau(X; \Delta + \lambda D)$ for $\lambda \geq 0$, there exists a finite separable cover $f : Y \to X$ such that
\[
\tau(X; \Delta + t D) ) = \Image\Big( f_* \O_Y(\lceil K_Y - f^*(K_{X}+ \Delta + t D) \rceil) \xrightarrow{\Tr_f} K(X) \Big)
\]
for all $t$ satisfying $t_0 \leq t < t_1$.
\end{lemma}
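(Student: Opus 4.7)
The plan is to apply the main theorem of \cite{BlickleSchwedeTuckerTestAlterations} at the single parameter $t = t_0$, and then argue via a simple sandwich that the resulting cover automatically computes $\tau(X; \Delta + tD)$ for every $t \in [t_0, t_1)$---without any further refinement. The essential input that makes this possible is Theorem \ref{thm.DiscAndRat}: it guarantees that $t_0$ is rational, so that $(X, \Delta + t_0 D)$ is log-$\bQ$-Gorenstein and \cite{BlickleSchwedeTuckerTestAlterations} can actually be invoked at $t = t_0$.

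Concretely, the main theorem of \cite{BlickleSchwedeTuckerTestAlterations} applied to $(X, \Delta + t_0 D)$ furnishes a finite separable cover $f: Y \to X$ with
\[
I(t_0) \;:=\; \Image\Big(f_* \O_Y(\lceil K_Y - f^*(K_X + \Delta + t_0 D)\rceil) \xrightarrow{\Tr_f} K(X)\Big) \;=\; \tau(X; \Delta + t_0 D).
\]
Writing $I(t)$ for the analogous image at an arbitrary parameter $t$, I will then use two monotonicity observations. First, $I(t)$ is weakly decreasing in $t$: since $f^*D \geq 0$, the divisor $K_Y - f^*(K_X + \Delta + tD) = (K_Y - f^*(K_X + \Delta)) - t f^*D$ shrinks componentwise as $t$ grows, and hence so do its round-up and the associated subsheaf. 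Second, $I(t) \supseteq \tau(X; \Delta + tD)$ for \emph{any} alteration $f$: given such $f$, take any alteration $h: Z \to Y$ such that $f \circ h$ is sufficiently large for $(X, \Delta + tD)$; then by the main theorem of \cite{BlickleSchwedeTuckerTestAlterations} combined with \eqref{eq:factorsthrough}, one obtains $\tau(X; \Delta + tD) = \Image(f \circ h)(t) \subseteq I(t)$.

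Finally, for each $t \in [t_0, t_1)$, the definition of adjacent $F$-jumping numbers (together with Proposition \ref{prop.BasicPropertiesOfTau}(2) and (5)) forces $\tau(X; \Delta + tD) = \tau(X; \Delta + t_0 D)$. Chaining the ingredients produces the sandwich
\[
\tau(X; \Delta + tD) \;\subseteq\; I(t) \;\subseteq\; I(t_0) \;=\; \tau(X; \Delta + t_0 D) \;=\; \tau(X; \Delta + tD),
\]
so every inclusion is forced to be an equality, which is the desired statement. The main obstacle---recognizing that a single cover suffices across the entire half-open interval---is really the discreteness and rationality supplied by Theorem \ref{thm.DiscAndRat}; once that is in hand, the remainder of the argument reduces to the monotonicity of round-ups and the standard factorization property \eqref{eq:factorsthrough}.
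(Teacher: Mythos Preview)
Your approach is essentially the same as the paper's: apply \cite[Theorem 4.6]{BlickleSchwedeTuckerTestAlterations} at the rational number $t_0$, then squeeze using the monotonicity of $t \mapsto I(t)$ and the constancy of $\tau(X;\Delta + tD)$ on $[t_0,t_1)$. There is, however, one small gap. In your second monotonicity observation you claim $I(t) \supseteq \tau(X;\Delta + tD)$ by choosing a further alteration $h$ so that $f \circ h$ is ``sufficiently large for $(X,\Delta + tD)$'' and invoking the main theorem of \cite{BlickleSchwedeTuckerTestAlterations}. But that theorem is stated for log-$\bQ$-Gorenstein pairs, and for irrational $t$ the divisor $K_X + \Delta + tD$ is not $\bQ$-Cartier (it is only $\bR$-Cartier), so the citation does not apply as written.

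The paper patches exactly this point by inserting an auxiliary \emph{rational} parameter $t'$ with $t \leq t' < t_1$: one applies \cite{BlickleSchwedeTuckerTestAlterations} at $t'$ (where it is legitimate) to a cover $g$ factoring through $f$, and then uses $I(t_0) \supseteq I(t) \supseteq I(t') \supseteq \Image_g(t') = \tau(X;\Delta + t'D) = \tau(X;\Delta + t_0 D)$. Your own monotonicity of $I(\cdot)$ already gives $I(t') \subseteq I(t)$, so the same trick slots directly into your sandwich; with that one-line adjustment, your argument coincides with the paper's.
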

\begin{proof}
Note that $t_0$ is rational by Theorem \ref{thm.DiscAndRat}.  Using the main result, Theorem 4.6, of \cite{BlickleSchwedeTuckerTestAlterations}, we can find a finite separable cover $f : Y \to X$ such that $f^* (K_{X}+ \Delta + t_0 D)$ is Cartier and
\[
\tau(X; \Delta + t_0 D) = \Image\Big( f_* \O_{Y}(\lceil K_{Y} - f^*(K_{X} + \Delta + t_0 D) \rceil) \xrightarrow{\Tr_f} K(X)\Big).
\]
We need to prove that the same cover works for every real number $t$ satisfying $t_0 \leq t < t_1$.

Choose rational number $t'$ such that $t_0 \leq t \leq t' < t_1$.  Applying \cite[Theorem 4.6]{BlickleSchwedeTuckerTestAlterations}  once again, we can find another cover $g : Y' \to X$, factoring through $f$, such that $g^* (K_{X}+ \Delta + t'D)$ is Cartier and
\[
\tau(X; \Delta + t' D) = \Image\Big( g_* \O_{Y'}(\lceil K_{Y'} - g^*(K_{X}+ \Delta + t'D) \rceil) \xrightarrow{\Tr_g} K(X)\Big).
\]
Since $t_0<t_1$ are adjacent $F$-jumping numbers and $t,t'$ are between $t_0$ and $t_1$, we have
\[ \tau(X; \Delta + t_0 D)= \tau(X; \Delta + t D)= \tau(X; \Delta + t' D).\]
Now, we simply observe that
\[
\begin{array}{rl}
  & \tau(X; \Delta + t D)\\
=  &         \tau(X; \Delta + t_0 D)\\
= &         \Image\Big( f_* \O_{Y}(\lceil K_{Y} -  f^*(K_{X}+ \Delta + t_0 D) \rceil) \xrightarrow{\Tr_f} K(X)\Big)\\
\supseteq & \Image\Big( f_* \O_{Y}(\lceil K_{Y} - f^*(K_{X}+ \Delta +t D) \rceil) \xrightarrow{\Tr_f} K(X)\Big)\\
\supseteq & \Image\Big( f_* \O_{Y}(\lceil K_{Y} - f^*(K_{X}+ \Delta +t' D) \rceil) \xrightarrow{\Tr_f} K(X)\Big)\\
\supseteq & \Image\Big( g_* \O_{Y'}(\lceil K_{Y'} - g^*(K_{X}+ \Delta + t'D) \rceil) \xrightarrow{\Tr_g} K(X)\Big)\\
= & \tau(X; \Delta + t' D)\\
= & \tau(X; \Delta + t D),
\end{array}
\]
where the first two containments $\supseteq$ are valid simply because $t' > t > t_0$ and the last occurs because $g$ factors through $f$ as in \eqref{eq:factorsthrough}. It is evident that each containment must in fact be an equality, and so
\[
\tau(X; \Delta + tD) = \Image\Big( f_* \O_{Y}(\lceil K_{Y} - f^*(K_{X}+ \Delta + tD) \rceil) \xrightarrow{\Tr_f} K(X)\Big)
\]
as claimed.
\end{proof}

We now prove our main result.

\begin{theorem}
\label{thm.MainTheorem}
Suppose that $(X, \Delta)$ is a log $\bQ$-Gorenstein pair and $D \geq 0$ is a Cartier divisor.  Then there exists a single finite separable cover $\pi : Y \to X$ such that
\begin{equation}
\label{eq:wantinproof}
\tau(X; \Delta + \lambda D) = \Image\Big( \pi_* \O_Y(\lceil K_Y - \pi^*(K_X + \Delta + \lambda D) \rceil) \xrightarrow{\Tr_{\pi}} K(X) \Big)
\end{equation}
for all real numbers $\lambda \geq 0$.  Alternatively, if $X$ is of finite type over a $F$-finite (respectively perfect) field, one may take $\pi : Y \to X$ to be a regular (separable) alteration.
\end{theorem}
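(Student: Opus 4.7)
The plan is to combine Theorem~\ref{thm.DiscAndRat} with Lemma~\ref{lem.TauOmegaBound}: discreteness confines the $F$-jumping numbers of $\tau(X; \Delta + \lambda D)$ in $[0,1]$ to a finite set, the lemma handles each jumping interval individually, these covers are amalgamated via a fiber product, and a periodicity argument in $\lambda$ extends the result to all $\lambda \geq 0$. The main obstacle, resolved by this combination, is producing a \emph{single} cover that works uniformly in $\lambda$ (as opposed to Lemma~\ref{lem.TauOmegaBound}, which only covers one jumping interval at a time).

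First I would reduce to a unit interval. For any integer $k \geq 0$ and any finite separable cover $\pi \: Y \to X$, the Cartier divisor $k\pi^{*}D$ factors out of the round-up, so
\[
\lceil K_Y - \pi^*(K_X + \Delta + (\lambda + k)D) \rceil = \lceil K_Y - \pi^*(K_X + \Delta + \lambda D) \rceil - k\pi^{*}D.
\]
The projection formula and compatibility of $\Tr_\pi$ with twists pulled back from $X$ then identify the trace image at $\lambda + k$ with the trace image at $\lambda$ tensored by $\O_X(-kD)$; Proposition~\ref{prop.BasicPropertiesOfTau}(1) shows $\tau$ transforms the same way. Hence \eqref{eq:wantinproof} at $\lambda$ is equivalent to \eqref{eq:wantinproof} at $\lambda + k$, so it suffices to find $\pi$ that works for all $\lambda \in [0,1]$. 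By Theorem~\ref{thm.DiscAndRat} the $F$-jumping numbers in $[0,1]$ form a finite set $0 = t_0 < \cdots < t_n$; let $t_{n+1}$ denote the next jumping number. Lemma~\ref{lem.TauOmegaBound} provides finite separable covers $f_i \: Y_i \to X$ computing $\tau(X; \Delta + tD)$ for all $t \in [t_i, t_{i+1})$, $i = 0, \ldots, n$. I would then take $\pi \: Y \to X$ to be the normalization of a dominant component of the fiber product $Y_0 \times_X \cdots \times_X Y_n$, which is a finite separable cover factoring through every $f_i$.

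To verify $\pi$ works, fix $\lambda \in [0,1]$ and choose $i$ with $\lambda \in [t_i, t_{i+1})$. Since $\pi$ factors through $f_i$, the factorization inequality \eqref{eq:factorsthrough} forces the trace image from $\pi$ at $\lambda$ to lie inside that from $f_i$, which equals $\tau(X; \Delta + \lambda D)$ by construction. For the reverse containment, apply the main alteration theorem of \cite{BlickleSchwedeTuckerTestAlterations} at the parameter $\lambda$ to produce a further finite separable cover $\pi' \: Y' \to X$ factoring through $\pi$ whose trace image at $\lambda$ equals $\tau(X; \Delta + \lambda D)$; then \eqref{eq:factorsthrough} applied to $\pi' \to \pi$ gives $\tau(X; \Delta + \lambda D) \subseteq $ trace image of $\pi$. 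Combining these, $\pi$ satisfies \eqref{eq:wantinproof} for all $\lambda \in [0,1]$, hence for all $\lambda \geq 0$ by the first paragraph. For the alternative statement when $X$ is of finite type over an $F$-finite (respectively perfect) field, I would apply de~Jong's alteration theorem to produce a regular (respectively regular separable) alteration $\widetilde Y \to Y$; the composite $\widetilde Y \to X$ factors through $\pi$, so the same factorization argument again yields \eqref{eq:wantinproof}.
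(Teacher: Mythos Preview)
Your proposal is correct and follows essentially the same route as the paper: reduce to $\lambda \in [0,1)$ via Proposition~\ref{prop.BasicPropertiesOfTau}(1) and the projection formula, use Theorem~\ref{thm.DiscAndRat} to get finitely many jumping intervals, apply Lemma~\ref{lem.TauOmegaBound} on each, and pass to a single dominating finite separable cover (then de~Jong for the regular alteration). One minor technical caveat: in your reverse-containment step you invoke \cite[Theorem~4.6]{BlickleSchwedeTuckerTestAlterations} ``at the parameter $\lambda$,'' but that result is stated only for log-$\bQ$-Gorenstein pairs, so for irrational $\lambda$ you should instead apply it at a rational $t' \in (\lambda, t_{i+1})$ and use that the trace image is decreasing in $\lambda$---precisely the sandwich maneuver already carried out in the proof of Lemma~\ref{lem.TauOmegaBound}.
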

\begin{proof}
There are only finitely many adjacent pairs of $F$-jumping numbers of $\tau(X; \Delta + \lambda D)$ that are between zero and one by Theorem \ref{thm.DiscAndRat}, and
so we can find a single finite separable cover $\pi : Y \to X$ dominating all of the covers produced by Lemma \ref{lem.TauOmegaBound} for each individual pair.  Thus, we have that \eqref{eq:wantinproof} is valid
for all $\lambda \in [0, 1)$; using the projection formula and Proposition \ref{prop.BasicPropertiesOfTau}(1) gives that it is valid for all $\lambda \geq 0$ as desired.
The remaining statements when $X$ is of finite type over an $F$-finite or perfect field follow immediately from the arguments of \cite[Corollary 4.7]{BlickleSchwedeTuckerTestAlterations} utilizing \cite[Theorem 4.1]{de_jong_smoothness_1996}.
\end{proof}

\bibliographystyle{skalpha}%\ok{--}{--}
\bibliography{CommonBib}
\end{document}